\documentclass[12pt,letterpaper]{amsart}

\parskip4pt plus2pt minus2pt
\parindent0mm

\setlength\textwidth{6in}
\setlength\textheight{8in}
\setlength\oddsidemargin{0.3in} 
\setlength\evensidemargin{0.3in}

\usepackage{amssymb,latexsym,amsmath,amsthm,graphicx}
\numberwithin{equation}{section}
\newtheorem{thm}{Theorem}[section]
\newtheorem{lemma}[thm]{Lemma}

\newcommand{\ol}{\overline}

\def \CC{\mathbb{C}}
\def \RR{\mathbb{R}}

\def \e{\varepsilon}

\begin{document}

\title[The mapping problem]{A solution to Sheil-Small's harmonic mapping problem for Jordan polygons}

\author[Daoud Bshouty]{Daoud Bshouty}
\author[Erik Lundberg]{Erik Lundberg}
\author[Allen Weitsman]{Allen Weitsman}
\address{Department of Mathematics\\ Technion\\ Haifa 32000\\
Israel
\\Email: daoud@tx.technion.ac.il}
\address{Department of Mathematics\\ Purdue University\\
West Lafayette, IN 47907-1395\\USA\\ Email: elundber@math.purdue.edu}
\address{Department of Mathematics\\ Purdue University\\
West Lafayette, IN 47907-1395\\USA\\ Email: weitsman@purdue.edu}



\begin{abstract}
The problem of mapping the interior of a Jordan polygon univalently by the Poisson integral of a step function
was posed by T. Sheil-Small (1989).
We describe a simple solution using ``ear clipping'' from computational geometry.
\end{abstract}
\maketitle
\section{Introduction}

In the subject of planar harmonic mappings, the mappings which arise as  Poisson integrals of step functions,
especially those which are univalent, play a prominent role (cf. \cite[pp.59-75] {Duren}).  
When the mapping is univalent, the image is a domain bounded by a Jordan polygon with its vertices at the steps of the boundary function. 

Mappings of this type  also appear when conformally parametrizing minimal graphs known as Jenkins-Serrin surfaces \cite{JS65}.  These are minimal graphs which take values $\pm \infty$ over the sides of a domain bounded 
by a Jordan polygon,  and if the parameter space is taken to be the unit disk $U$, then the first two coordinate functions of the parametrization give a univalent harmonic mapping which is given by the Poisson integral of a step function  \cite{BW}.

In 1989, T. Sheil-Small \cite{S-S1989} made a study of the mapping properties of Poisson integrals of step functions and  posed the following problem.  

\noindent{ \bf The mapping problem}.  {\it Given a  domain $D$ bounded by a Jordan polygon, does there exist a univalent harmonic mapping $f$, which is the Poisson integral of a step function, such that $f(U)=D$?}

There is a classical univalence criterion for harmonic mappings of $U$ onto a convex domain.  The problem
stated by T. Rad\'o in 1926 \cite{Rado26} and solved by H. Kneser \cite{Kneser26} the same year, shows that
for any homeomorphism of the unit circle $\partial U$ onto the boundary $\partial D$ of a convex domain $D$, the harmonic extension maps $U$ univalently onto $D$.  Later G. Choquet \cite{Choquet45} gave another proof 
which allowed the boundary function to be constant on arcs, and even to have jump discontinuities.  Thus, by
Choquet's theorem, the mapping problem has a positive solution when the polygon is convex.

This mapping problem has also been repeated in the book \cite[p. 402]{S-S2002} 
and more recently in the book \cite[p. 314]{MAA2012}).  
Also it was conjectured in \cite{S-S1989} that there would be polygons for which there is no such mapping.

In this paper we shall describe an algorithm which leads to a positive solution to the mapping problem.

\begin{thm}\label{thm:main}
Given any Jordan polygon $\Pi$ bounding a domain $D$, there exists a step function $f (e^{it})$ on $\partial U$ whose harmonic extension gives a univalent harmonic mapping of $U$ onto $D$.
\end{thm}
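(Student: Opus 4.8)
\emph{Plan of proof.} The plan is to argue by induction on the number $n$ of vertices of $\Pi$, removing one vertex at a time by \emph{ear clipping}. First I would isolate the bookkeeping in a reduction: \emph{if} $F$ is harmonic on $U$, is the Poisson integral of a step function whose values, read in the positive cyclic order on $\partial U$, are exactly the vertices of $\Pi$ in their cyclic order along $\Pi$, \emph{and} $F$ is sense-preserving on $U$ (positive Jacobian everywhere), then $F$ maps $U$ univalently onto $D$. Indeed, the boundary cluster set of such an $F$ is $\Pi$ traversed monotonically once, so the argument principle for harmonic mappings (the device behind Choquet's theorem in the presence of jump discontinuities) shows that $F$ attains each value of $D$ exactly once and no value outside $\ol D$. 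Thus the whole problem reduces to producing, for the given $\Pi$, a step function with the prescribed cyclic sequence of vertex values whose harmonic extension is sense-preserving.

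The base case $n=3$ is immediate, since then $\Pi$ is a triangle, hence convex, and by Choquet's theorem every step function running monotonically once through the three vertices has univalent --- a fortiori sense-preserving --- harmonic extension. For $n\ge4$ I would invoke the two-ears theorem from computational geometry: $\Pi$ has an ear, so after a cyclic relabelling the triangle $T=\triangle v_{n-1}v_nv_1$ lies in $\ol D$, touches $\partial D$ only along $[v_{n-1},v_n]$ and $[v_n,v_1]$, and its third side $[v_{n-1},v_1]$ lies in $D$ except for its endpoints. Clipping the ear yields the Jordan polygon $\Pi'=v_1v_2\cdots v_{n-1}$ with interior $D'$, where $\ol D=\ol{D'}\cup\ol T$ and $D',T$ lie on opposite sides of the common edge $[v_{n-1},v_1]$. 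By the inductive hypothesis there is a univalent harmonic step-function mapping $G$ of $U$ onto $D'$, and after composing with a conformal automorphism of $U$ I may assume the jump point of the boundary function of $G$ corresponding to the side $[v_{n-1},v_1]$ sits at $\zeta_0=1$, with value $v_{n-1}$ just clockwise of $1$ and $v_1$ just counter-clockwise. For small $\e>0$ I would then take $F=F_\e$ to be the Poisson integral of the step function agreeing with the boundary function of $G$ off the arc $\{e^{it}:|t|<\e\}$ and equal to $v_n$ on that arc. Its cyclic value sequence is that of $G$ with $v_n$ inserted between $v_{n-1}$ and $v_1$, which is precisely the cyclic vertex sequence of $\Pi$; so by the reduction it only remains to show that $F_\e$ is sense-preserving for some small $\e$.

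This last step is the crux. The dilatation $\omega_{F_\e}=g'/h'$ (where $F_\e=h+\bar g$) is a rational function which is unimodular almost everywhere on $\partial U$, and $F_\e$ is sense-preserving on $U$ exactly when $\omega_{F_\e}$ has no pole in $\ol U$ --- that is, is a genuine finite Blaschke product; this holds for $\omega_G$, since $G$ is univalent and a univalent harmonic mapping has nowhere-vanishing Jacobian by Lewy's theorem. Since the rational functions $h',g'$ of $F_\e$ converge, away from $\zeta_0=1$, to those of $G$ as $\e\to0$, the only place a pole of $\omega_{F_\e}$ could enter $\ol U$ is near $\zeta_0=1$; there the boundary function of $F_\e$ carries the two jumps $v_{n-1}\to v_n$ at $e^{-i\e}$ and $v_n\to v_1$ at $e^{i\e}$, separated by the constant arc of value $v_n$, and the task is to verify that the associated local modification of $\omega$ near $1$ creates no pole in $\ol U$. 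The geometric content is that, precisely because $T$ is an \emph{ear} --- the vertex $v_n$ lying on the side of the line through $v_{n-1}$ and $v_1$ away from $D'$ --- the bending forced on $F_\e$ at $\zeta_0$ is oriented in the sense-preserving direction. Granting this local estimate, $F_\e$ is sense-preserving throughout $U$ for $\e$ small, hence maps $U$ univalently onto $D$, closing the induction. As the combinatorial inputs (the two-ears theorem, and that clipping an ear again gives a Jordan polygon) are standard, I expect this local dilatation estimate at the reinserted vertex to be the main obstacle.
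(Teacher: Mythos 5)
Your proposal follows the same strategy as the paper --- induction on the number of vertices, the two-ears theorem, clipping an ear, and reinserting the clipped vertex $v_n$ as the value on a small arc of length comparable to $\e$ next to the jump point of the side $[v_{n-1},v_1]$, with a perturbation argument showing that nothing goes wrong away from that point. But the step you label the crux and then explicitly defer (``Granting this local estimate\dots'') is precisely the mathematical heart of the paper's proof, and it is not carried out in your write-up, so as it stands there is a genuine gap. Concretely: away from the insertion point, convergence of $h_\e'$ to $h'$ plus Hurwitz keeps $n-2$ of the zeros of $h_\e'$ (equivalently, potential poles of the dilatation) outside $\ol U$; but one new zero of $h_\e'$ is created near the insertion point $1$, where two poles of $h_\e'$ are colliding as $\e\to 0$, and one must show that this zero also stays outside $\ol U$. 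The paper does this by the rescaling $z=\e w+1$: setting $H_\e(w)=h_\e'(\e w+1)$, a second application of Hurwitz to $\e H_\e$ shows the remaining zero satisfies $w_\e\to w_0=-\frac{c_{n+1}-c_1}{c_n-c_1}\,i$, and the ear condition (the corner $[c_n,c_{n+1},c_1]$ is an \emph{outside} corner, so $\arg\frac{c_{n+1}-c_1}{c_n-c_1}\in(0,\pi)$) puts $w_0$ strictly in the right half-plane, hence $\e w_\e+1\notin\ol U$ for small $\e$. Your intuition that the ear condition is exactly what orients the local ``bending'' correctly is right, but without this (or an equivalent) computation the inductive step is not closed.

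A secondary point: your induction hypothesis is only that $G$ is univalent, and you infer via Lewy's theorem that $\omega_G$ has no pole in the open disk $U$. That is not quite enough for the perturbation step, which needs the poles of the dilatation (zeros of $h'$) of the inductive map to lie strictly outside the \emph{closed} disk, so that they cannot drift across $\pd U$ for small $\e>0$. The paper avoids this by strengthening the induction statement itself to ``the zeros of $h'$ lie in $\CC\setminus\ol U$'' (which is what Sheil-Small's Theorem 5 requires for univalence anyway, and which the rescaling argument above reproduces for $h_\e'$, keeping the induction self-sustaining). You should phrase your induction hypothesis the same way; your reduction of univalence to the sense-preserving property plus the degree argument is otherwise fine and is essentially the role played by Sheil-Small's Theorem 5 in the paper.
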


In order to state the problem precisely, consider the polygon $\Pi = [c_1,c_2,..,c_n, c_1]$
as a positively oriented Jordan curve with distinct vertices $c_k$
bounding a domain $D$.
For a sequence of intervals
$0 = t_0 < t_1 < .. < t_n = t_0 + 2 \pi$,
consider 
the Poisson integral $f(z)$ of the (complex) step function (also denoted by $f$)
\begin{equation}\label{f(x)1}
f(e^{it}) = c_k \quad (t_{k-1} < t < t_k).
\end{equation}
The problem then comes down to showing that there is a choice of $t_0, t_1,.....,t_n$ for 
which the Poisson extension is univalent.

We prove this theorem in Section 2.
In Section 3, we describe an estimate for harmonic measure in a half-plane which can 
be useful in determining the univalence across the sides of a polygon as it arises as the image of
the Poisson integral of a step function. 

\section{A solution to the mapping problem (proof of Theorem \ref{thm:main})}

We follow the notation in \cite{S-S1989}.

With $f(z) = h(z) + \ol{g(z)}$ represented by analytic functions $h$ and $g$,
it is enough to show, by Theorem 5 in \cite{S-S1989}, 
that the zeros of 
$$h'(z) = \sum_{k=1}^n \frac{\alpha_k}{z-\zeta_k}$$
are outside $U$ the unit disk,
where $\alpha_k = \frac{1}{2 \pi i} (c_k - c_{k+1} )$ for $k< n$,
and $\alpha_n = \frac{1}{2 \pi i} (c_n - c_{1} ),$
and $\zeta_k = e^{it_k}$, for $k=1,..,n$.

We give a proof by induction on the number of vertices.

{\bf Induction statement:} Given any Jordan $n$-gon, there exists a sequence of intervals 
such that the zeros of $h'(z)$ are in $\CC \setminus \overline{U}$.

The ``base case'' $n=3$ is a triangle and the statement follows from the Rado-Kneser-Choquet Theorem.

Inductive step: Suppose the Induction Statement is true up to some $n$.
Consider a Jordan $n+1$-gon, $ \Pi = [c_1,c_2,..,c_{n+1}, c_1]$.
We follow the triangulation algorithm known as ``ear clipping''.
Namely, find a vertex of $\Pi$, without loss of generality assume the vertex is $c_{n+1}$, 
such that removing it results in a Jordan polygon with $n$ vertices.
Such a vertex is called an ``ear'' in computational geometry,
and it is well-known that every Jordan polygon has at least two ears \cite{Meisters75}.

{\bf Lemma A.}(The ``two ears'' theorem)
{\it Any Jordan polygon has at least two ``ears''.}

Consider the $n$-gon $\Pi' = [c_1,c_2,..,c_n, c_1]$.
By the induction statement, there is a choice of intervals 
$$0 = t_0 < t_1 < .. < t_n = t_0 + 2 \pi$$
so that the Poisson integral of the corresponding step function $f(z)$ is univalent, 
and the zeros of $h'(z)$ are outside the closed unit disk $\ol{U}$.

We construct a map $f_{\e}(z)$ to $\Pi$ 
by making a new choice of intervals 
$$0 = \tau_0 < \tau_1 < .. < \tau_n <\tau_{n+1} = \tau_0 + 2\pi .$$
Namely, we take $\tau_k = t_k$ for $k=0,1,..,n-1$ and $\tau_n = 2\pi - \e$.
Then $f_{\e}(z)$ is taken to be the Poisson integral of the step function
$$f_\e(e^{it}) = c_k \quad (\tau_{k-1} < t < \tau_k).$$
In comparison with the choice of intervals used for the map $f(z)$,
this slightly alters the interval corresponding to $c_n$
and introduces a new interval $(\tau_n,\tau_{n+1})$ of size $\e$ corresponding to the ear $c_{n+1}$ (see Figure \ref{fig:ear}
\begin{figure}[h]
    \begin{center}
   \includegraphics[scale=.3]{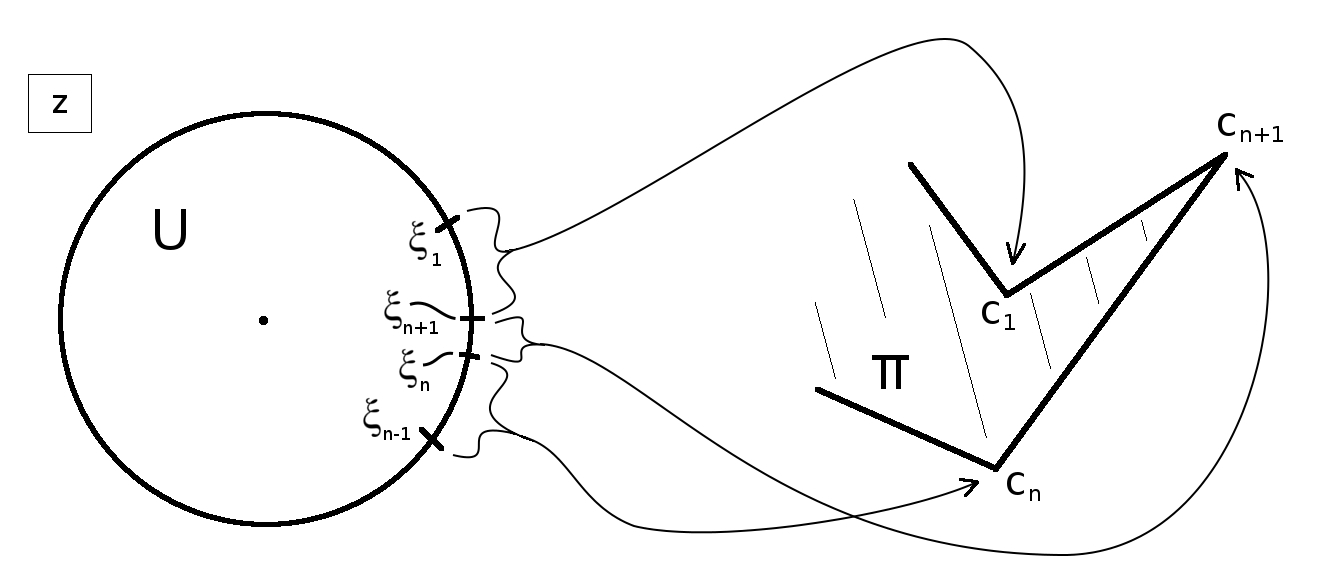}
    \end{center}
    \caption{Illustration of the map $f_\e$ in the vicinity of the ear $c_{n+1}$.}
    \label{fig:ear}
\end{figure}

With these choices and using the notation $\beta_k = 
\displaystyle{\frac{1}{2 \pi i} }(c_k - c_{k+1})$ for $k=1,..,n$,
$\beta_{n+1} = \displaystyle{\frac{1}{2 \pi i}} (c_{n+1} - c_{1})$, 
and $\xi_k = e^{i\tau_k}$ for $k=1,..,n+1$, we have 
\begin{equation}\label{eq:heps}
h_{\e}'(z) = \sum_{k=1}^{n+1} \frac{\beta_k}{z- \xi_k},
\end{equation}
where $h_\e (z)$ is the analytic part of $f_\e (z)$.

\noindent {\bf Claim 1:}
As $\e \rightarrow 0$, $h_{\e}'$ approximates $h'$ uniformly outside any neighborhood of the point $\xi_{n+1}=1$.

To verify Claim 1, we note that the first $n-1$ terms in the sum (\ref{eq:heps})
are the same as the first $n-1$ terms in
$$h'(z) = \sum_{k=1}^n \frac{\alpha_k}{z-\zeta_k}.$$
Thus, in order to prove  Claim 1 we only need to check that as $\e \rightarrow 0$ the last two terms in $h'_{\e}$
$$ \frac{\beta_n}{z- \xi_n} + \frac{\beta_{n+1}}{z- \xi_{n+1}}$$
converge uniformly to the last term in $h'$
$$ \frac{\alpha_n}{z-\zeta_n} = \frac{c_n - c_1}{z-1},$$
which follows from 
$$\frac{\beta_n}{z- \xi_n} + \frac{\beta_{n+1}}{z- \xi_{n+1}} = 
\frac{c_n-c_{n+1}}{z- e^{-i\e}} + \frac{c_{n+1} - c_1}{z- 1}.$$

By  Claim 1 and Hurwitz's theorem, $h_{\e}'$ has a zero near each of the zeros of $h'$.
For $\e>0$ sufficiently small, this places at least $n-2$ (counting multiplicities) of the $n-1$ zeros of $h_{\e}'$
outside $\ol{U}$ (not counting $\infty$ which is a zero of multiplicity two).

It remains to show that the final zero of $h'_\e$ is also outside $\ol{U}$.
Estimating the location of this zero is slightly complicated by the fact that it converges to $\xi_{n+1} = 1$,
the point where two poles are merging as $\e \rightarrow 0$.
Thus, we use a renormalization.

Let us write
$z = \e w + \xi_{n+1} = \e w + 1 $, and 
$$H_{\e}(w) := h_{\e}'( \e w + 1).$$
By the above, $H_{\e}(w)$ has $n-2$ zeros (counting multiplicities) converging to $\infty$ as $\e \rightarrow 0$.
The remaining zero converges to a finite point $w = w_0$ in the right half-plane as follows from the next claim.

\noindent {\bf Claim 2:} $H_{\e}(w)$ has a zero at $w = w_\e$ such that 
$$w_\e \rightarrow w_0 := -\frac{c_{n+1} - c_1}{c_n - c_{1}} i, \quad \text{as } \e \rightarrow 0 .$$

Before proving  Claim 2 let us see how it establishes the result.
It follows from the fact that $[c_n,c_{n+1},c_1]$ is an \emph{outside} corner
that the argument of $\displaystyle{\frac{c_{n+1} - c_1}{c_n - c_1}}$ is strictly between zero and $\pi$.
Thus, $-\displaystyle{\frac{c_{n+1} - c_1}{c_n - c_1}} i$ is in the right half-plane,
and, for $\e$ sufficiently small, $w_\e$ is also in the right half-plane.
This places $\e w_\e + 1$ (the remaining zero of $h_{\e}'$) outside of $\ol{U}$,
and this completes the inductive step.  

It remains to prove Claim 2.

\begin{proof}[Proof of Claim 2]
 
We have 
\begin{equation}\label{eq:epsH1}
\e H_{\e}(w) = \sum_{k=1}^{n+1} \frac{ \beta_k}{w + (\xi_{n+1} - \xi_k)/ \e}.
\end{equation}
Choose a disk $V$ centered at the point
$$w_0 := -\frac{c_{n+1} - c_1}{c_n - c_{1}} i$$ 
such that $\ol{V}$ omits each of the points $w=0$ and $w=-i$.
As $\e \rightarrow 0$, the first $n-1$ terms in (\ref{eq:epsH1}) converge uniformly to zero in $V$ while the final
terms
$$\frac{ \beta_n}{w + (\xi_{n+1} - \xi_n)/ \e } 
+ \frac{ \beta_{n+1}}{w}$$
converge uniformly to
\begin{equation}\label{eq:limit}
\frac{ \beta_n}{w + i } 
+ \frac{ \beta_{n+1}}{w},
\end{equation}
where we have used 
$$\frac{\xi_{n+1} - \xi_n}{\e} = \frac{1 - e^{-i \e}}{\e} = \frac{1 - (1 - i \e + O(\e^2) )}{\e} = i + O(\e).$$
This last expression (\ref{eq:limit}) has a single zero in $V$,
namely at 
$$w_0 = - \frac{\beta_{n+1}}{\beta_n+\beta_{n+1}}i = -\frac{c_{n+1} - c_1}{c_n - c_1} i.$$
By Hurwitz's Theorem (\ref{eq:epsH1}) has exactly one zero in $V$, and this zero converges to $w_0$.

\end{proof}

\section{The law of sines lemma}

In this section, we replace the unit disk with the upper half-plane $H$, and consider the harmonic measure
$\omega_I$ in  $H$ of an interval $I$ on the real axis.
The asymptotic behavior of $\omega_I(z)$ as $z \in H$ approaches a point on the real axis can be used
to detect possible folding near an edge of the image polygon $\Pi$.
Lemma \ref{LOS} below was an initial guide for Theorem \ref{thm:main},
although in the end we did not need it.  However, it seems that it may be of independent interest.

Recall that for $z \in H$, the harmonic measure $\omega_I(z)$ of an interval $I \subset \RR$ equals
the angle between the two segments joining $z$ to each of the endpoints of $I$.

\begin{lemma}[Law of sines lemma]
\label{LOS}

Suppose $x_0 < x_1 < x_2$ are points on the real axis and the intervals $[x_0,x_1]$ and $[x_1,x_2]$ 
have length $A$ and $B$ respectively. 
Suppose $z \in H$ approaches $x_0$ along a segment.  
Let $y$ be the imaginary part of $z$, and let $\omega(z)$ denote the harmonic measure of $[x_1,x_2]$.
Then, 
$$ \frac{\omega(z)}{y}  \rightarrow \frac{B}{A^2 + AB} \quad (\text{as } z \rightarrow x_0).$$
In particular, this limit is independent of the angle of approach of $z \rightarrow x_0$.
\end{lemma}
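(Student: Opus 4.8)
The plan is to write the harmonic measure $\omega(z)$ explicitly as a difference of two arctangents and expand to first order in $y$ as $z \to x_0$. Parametrize the segment of approach by $z = x_0 + y(\cot\theta + i)$, where $\theta \in (0,\pi)$ is the fixed angle of approach (so the horizontal offset of $z$ from $x_0$ is $y\cot\theta$, which is $O(y)$). Recall that $\omega(z)$, the harmonic measure of $[x_1,x_2]$, equals the angle subtended at $z$ by that interval; writing $z = u + iv$ with $u = x_0 + y\cot\theta$ and $v = y$, we have
$$\omega(z) = \arg(x_2 - z) - \arg(x_1 - z) = \arctan\!\frac{y}{x_1 - u} - \arctan\!\frac{y}{x_2 - u},$$
with the usual branch conventions (both points $x_1, x_2$ lie to the right of $\mathrm{Re}\,z$ once $y$ is small, since $u \to x_0 < x_1$).

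Next I would Taylor-expand each arctangent. Since $x_1 - u \to x_1 - x_0 = A > 0$ and $x_2 - u \to x_2 - x_0 = A + B > 0$, both arguments of the arctangents tend to $0$ linearly in $y$, so $\arctan(y/(x_j-u)) = y/(x_j - u) + O(y^3)$. Hence
$$\omega(z) = \frac{y}{x_1 - u} - \frac{y}{x_2 - u} + O(y^3) = y\left(\frac{1}{A - y\cot\theta} - \frac{1}{A+B - y\cot\theta}\right) + O(y^3).$$
Dividing by $y$ and letting $y \to 0$ (equivalently $z \to x_0$), the $y\cot\theta$ corrections vanish and we obtain
$$\frac{\omega(z)}{y} \longrightarrow \frac{1}{A} - \frac{1}{A+B} = \frac{B}{A(A+B)} = \frac{B}{A^2 + AB}.$$
Since $\theta$ disappears from the limit, the result is independent of the angle of approach, as claimed.

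The only subtlety — and the step I would be most careful about — is the bookkeeping of the $O(y\cot\theta)$ horizontal displacement: one must confirm that it does not survive in the limit and that it does not interfere with the arctangent branch (it does not, because $\cot\theta$ is a fixed finite constant for a straight-line approach, so $y\cot\theta \to 0$ and $x_j - u$ stays bounded away from $0$). If one wanted to allow non-straight approach paths one would need $\mathrm{Re}\,z - x_0 = o(1)$ together with $y \to 0$, which still suffices; but for the stated lemma the segment hypothesis makes this automatic. Everything else is a routine first-order expansion.
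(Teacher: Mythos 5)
Your proof is correct, and it reaches the limit by a slightly different elementary computation than the paper. Both arguments hinge on the same key fact, stated just before the lemma: with the paper's normalization, $\omega(z)$ is the angle subtended at $z$ by $[x_1,x_2]$. The paper computes this angle $\theta$ via the law of sines in the triangle with vertices $x_1$, $x_2$, $z$ (introducing the auxiliary angle $\psi$ at $x_2$ and the side $C=|z-x_1|$), obtaining $\sin\theta/y = B/\bigl(\sqrt{y^2+(A'+B)^2}\,\sqrt{y^2+A'^2}\bigr)$ with $A'=x_1-\mathrm{Re}\,z$, and then passes to the limit using $\sin\theta=\theta+O(\theta^3)$. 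You instead write the angle as $\arctan\bigl(y/(x_1-u)\bigr)-\arctan\bigl(y/(x_2-u)\bigr)$ and Taylor-expand each arctangent, which trades the law of sines for the expansion $\arctan t = t + O(t^3)$; the two small-angle approximations play exactly the same role. Your version is marginally more streamlined (no auxiliary triangle data, and the independence of the approach angle is transparent because $\cot\theta$ visibly drops out of the first-order term), while the paper's version supplies the geometric picture that motivates the lemma's name. Your handling of the branch and of the $O(y\cot\theta)$ horizontal displacement is careful and matches the paper's observation that $A'\to A$; note only that you should keep the paper's convention that harmonic measure is the unnormalized angle (with the usual $1/\pi$ normalization the limit would acquire a factor $1/\pi$), which you have implicitly done.
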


\begin{proof}[Proof of Lemma]

Let $x$ be the real part of $z$ and let $A' = x_1 - x$.

By the law of sines,
$$ \frac{\sin \theta}{B} = \frac{\sin \psi}{C} = \frac{y / \sqrt{y^2+(A'+B)^2}}{\sqrt{y^2+A'^2}} ,$$
where $\psi$ is the angle of the corner $[x_0,x_2,z]$, and $C$ is the length of the segment $[x_1,z]$.

Rearranging,
$$ \frac{\sin \theta}{y} = \frac{B}{\sqrt{y^2 + (A' + B)^2} \sqrt{y^2 + A'^2}} .$$

Letting $z \rightarrow x_0$, we have $A' \rightarrow A$, and $y \rightarrow 0$.
Thus,
$$ \frac{\sin \theta}{y} \rightarrow \frac{B}{(A + B)A} .$$
Since $$\sin \theta = \theta +  O(\theta^3),$$
we have
$$ \frac{\omega(z)}{y} \rightarrow \frac{B}{A^2 + AB},$$
as $z \rightarrow x_0$.
\end{proof}
\begin{figure}[h]
    \begin{center}
    \includegraphics[scale=.3]{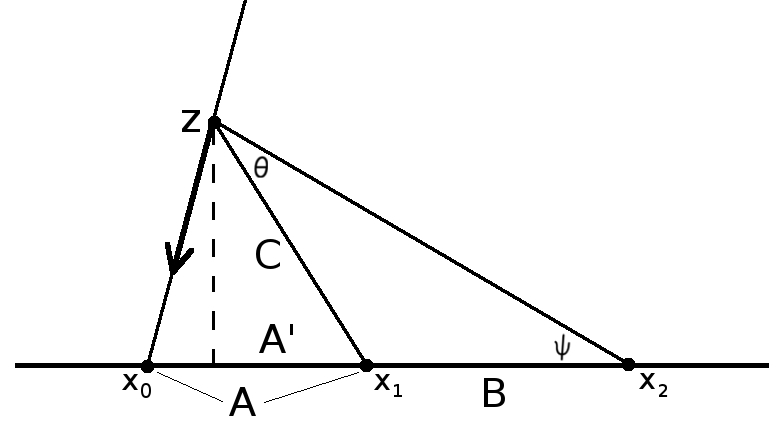}
    \end{center}
    \caption{$z$ approaches $x_0$ along a fixed angle.}
    \label{fig:LOS}
\end{figure}

In order to see how this can be useful, suppose that $\Pi=[c_1,c_2,.....,c_n,c_1]$ is a Jordan polygon
and $f$ is the harmonic extension of the corresponding step function as before, but now composed with
a M\"obius transformation so it is defined in $H$.  Then, corresponding to (\ref{f(x)1}) we have points
$$\zeta_1 < \zeta_2 < .. < \zeta_n$$ 
on the real axis, 
$$f(x) = c_k \quad (\zeta_{k} < x < \zeta_{k+1}),$$

for $k=1,....,n-1$, and $f(x) = c_n$ for the interval $\{x<\zeta_1\} \cup \{x > \zeta_n\}$ containing infinity.

Let $\omega_k(z)$ be the harmonic measure of the interval $[\zeta_{k},\zeta_{k+1}]$ with respect to $z$.
The map $f$ can be expressed simply as a linear combination of the vertices $c_k$ weighted by the harmonic measure $\omega_k(z)$ of the interval that is mapped to $c_k$:
\begin{equation}\label{f(x)}
 f(z) = c_1 \omega_1(z) + c_2 \omega_2(z) + .. + c_n \omega_n(z) .
\end{equation}

As noted in the introduction, if the map $f$ fails to be univalent, then there must be folding over the boundary,
so it is natural to consider the local behavior of $f$ near an edge.
Fix $m$ and let $z \rightarrow \zeta_m$.
Then $f(z)$ approaches a value on the edge $[c_{m-1},c_{m}]$.
Applying the lemma, we obtain an approximation for $\omega_k(z)$ in terms of the lengths $\ell_j$ of the intervals $[\zeta_{j},\zeta_{j+1}]$.
Namely, when $m<k<n$,
$$\omega_k(z) / y \approx \frac{\ell_k}{(\ell_m + \ell_{m+1} + .. + \ell_{k-1})^2 + (\ell_m + \ell_{m+1} + .. + \ell_{k-1})\ell_k}.$$
We obtain a similar expression for $\omega_k(z)$ when $k<m$, and when $k=n$ (corresponding to the infinite interval) we have
$$\omega_n(z) / y \approx \frac{1}{\ell_m + \ell_{m+1} + .. + \ell_{n-1}}.$$

Guided by these approximations, one may quantify in terms of the relative lengths of the intervals,
the contributions of the individual terms in (\ref{f(x)}).  For the mapping problem of the current 
paper,
 one may choose the lengths $\ell_k$ in order to prevent folding near some edge $[c_{m-1},c_m]$,
and in order to simultaneously prevent folding over all edges, the lengths $\ell_k$ must satisfy a system of inequalities.

\bibliographystyle{amsplain}

\end{document}